\theoremstyle{plain}
\newtheorem{thm}{Theorem}[section]
\newtheorem{lem}[thm]{Lemma}
\newtheorem{prop}[thm]{Proposition}
\theoremstyle{definition}
\newtheorem{defn}[thm]{Definition}
\newtheorem{rem}[thm]{Remark}
\title[Non unique solutions]{Non unique solutions to boundary value problems for non symmetric divergence form equations
}
\author{Andreas Axelsson}
\address{Andreas Axelsson, Matematiska institutionen, Stockholms universitet, 106 91 Stockholm, Sweden}
\email{andax@math.su.se}
\mathchardef\semic="303B
\newcommand{\Mcc}{{M\raise.55ex\hbox{\lowercase{c}}}}
\newcommand{\R}{{\mathbf R}}
\newcommand{\C}{{\mathbf C}}
\newcommand{\mL}{{\mathcal L}}
\DeclareMathOperator{\re}{Re}
\newcommand{\im}{\text{{\rm Im}}\,}
\newcommand{\sett}[2]{ \{ #1 \, \semic \, #2 \} }
\newcommand{\dom}{\textsf{D}}
\newcommand{\s}{\text{{\rm sgn}}}
\newcommand{\barint}{\mbox{$ave \int$}}
\newcommand{\divv}{{\text{{\rm div}}}}
\newcommand{\curl}{{\text{{\rm curl}}}}
\newcommand{\hut}[1]{\check #1}
\newcommand{\wt}{\widetilde}
\newcommand{\pd}{\partial}
\newcommand{\eps}{\epsilon}
\newcommand{\pv}{\text{p.v.}\!}
\def\barint_#1{\mathchoice
            {\mathop{\vrule width 6pt
height 3 pt depth -2.5pt
                    \kern -8.8pt
\intop}\nolimits_{#1}}%
            {\mathop{\vrule width 5pt height
3 pt depth -2.6pt
                    \kern -6.5pt
\intop}\nolimits_{#1}}%
            {\mathop{\vrule width 5pt height
3 pt depth -2.6pt
                    \kern -6pt
\intop}\nolimits_{#1}}%
            {\mathop{\vrule width 5pt height
3 pt depth -2.6pt
          \kern -6pt \intop}\nolimits_{#1}}}
\begin{document}
\begin{abstract}
We calculate explicitly solutions to the Dirichlet and Neumann boundary value problems in the 
upper half plane, for a family of divergence form equations with non symmetric coefficients with a jump
discontinuity.
It is shown that the boundary equation method and the Lax--Milgram method for constructing
solutions may give two different solutions when the coefficients are sufficiently non symmetric.
\end{abstract}
\maketitle
%Math Subject Classifications: 35J25, 42A50

%
%
%
%
\section{Introduction}

Recently, new techniques in harmonic analysis have been used to study boundary value problems (BVP's) for divergence form elliptic equations with non symmetric, or more general complex coefficients.
In the half plane, for real but non symmetric coefficients, $L_p$ solvability of the Dirichlet problem for sufficiently large $p$ was obtained by 
Kenig, Koch, Pipher and Toro~\cite{KKPT} and
$L_p$ solvability of the Neumann and regularity problems, for sufficiently small $p$, was proved 
by Kenig and Rule~\cite{KR}.
In $\R^n$, two boundary equation methods have been studied by
Alfonseca, Auscher, Axelsson, Hofmann and Kim~\cite{AAAHK} and by
Auscher, Axelsson and Hofmann~\cite{AAH} where, among other things, 
these BVP's are proved to be well posed in $L_2$ for small complex $L_\infty$ perturbations
of real symmetric coefficients. 

Unlike the case of real symmetric coefficients, for general non symmetric coefficients, well posedness
of these classical BVP's may fail.
In \cite{KKPT} and \cite{KR}, the family
$$
  A_k(x) :=
     \begin{bmatrix}
       1 & k\s(x) \\
       -k \s(x) & 1
     \end{bmatrix}
$$
of non symmetric coefficient matrices with a jump at $x=0$ was studied and shown to provide 
counter examples to well posedness for certain values of the parameter
$k\in \R$.
More precisely, the following theorem was proved in
\cite[Theorem (3.2.1)]{KKPT} and \cite[Appendix]{KR}.
\begin{thm}   \label{thm:kkprt}
  Let $1<p<\infty$.
  The Dirichlet problem (Dir-$A_k,p$) fails to be well posed in the $\dot H^1$ sense if $k> \tan(\tfrac \pi{2q})$, where $1/q= 1-1/p$.
  The regularity problem (Reg-$A_k,p$) fails to be well posed in the $\dot H^1$ sense if $k< -\tan(\tfrac \pi{2p})$.
  The Neumann problem (Neu-$A_k,p$) fails to be well posed in the $\dot H^1$ sense if $k> \tan(\tfrac \pi{2p})$. 
\end{thm}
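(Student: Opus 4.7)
The plan is to construct explicit nontrivial solutions of $\operatorname{div}(A_k \nabla u) = 0$ on the upper half plane with vanishing boundary data, giving non-uniqueness in each BVP. Away from the jump line $\{x=0\}$, the antisymmetric entries of $A_k$ are constant, so
$$\operatorname{div}(A_k \nabla u) = \Delta u \quad \text{on } \{x \neq 0\},$$
and solutions are piecewise harmonic. A distributional computation gives $\operatorname{div}(A_k \nabla u) = \Delta u + 2k\,\delta_0(x)\,\partial_t u$, whence the transmission condition across $\{x=0\}$ reads
$$\partial_x u(0^+, t) - \partial_x u(0^-, t) = -2k\,\partial_t u(0, t).$$

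Next, switch to polar coordinates $(r,\theta)$ on $\R^2_+$, with $\theta = 0, \pi$ parameterising the two boundary half-lines and $\theta = \pi/2$ the jump ray, and seek separated solutions $u(r,\theta) = r^\lambda \phi(\theta)$. Then $\phi$ solves $\phi'' + \lambda^2 \phi = 0$ on each of the two sectors $(0,\pi/2)$ and $(\pi/2,\pi)$, subject to: (a) boundary conditions at $\theta = 0, \pi$ imposed by the BVP ($\phi(0)=\phi(\pi)=0$ for Dir and Reg; the mixed conditions $\phi'(0)=k\lambda\phi(0)$ and $\phi'(\pi)=-k\lambda\phi(\pi)$ translating the Neumann conormal for Neu); (b) continuity of $\phi$ at $\pi/2$; and (c) the transmission condition $\phi'(\pi/2^+) - \phi'(\pi/2^-) = -2k\lambda\,\phi(\pi/2)$. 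Taking $\phi(\theta) = c_1\cos(\lambda\theta) + c_2\sin(\lambda\theta)$ in each sector and solving the resulting linear system yields a transcendental eigenvalue equation for $\lambda$: one obtains $\tan(\lambda\pi/2) = 1/k$ for Dir and Reg, and $\tan(\lambda\pi/2) = -1/k$ for Neu (the latter arising from the branch in which $\phi(\pi/2) = 0$), parameterising a family of nontrivial modes $u = r^\lambda\phi(\theta)$ with vanishing boundary data.

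The final step is to match the admissible range of the homogeneity $\lambda$ to the $\dot H^1$--$L^p$ framework of each BVP: heuristically, $\lambda < 1/p$ for Dir, an interval of negative $\lambda$ bounded below by $-1/q$ for Reg, and $\lambda > 1 + 1/p$ for Neu. Translating these $\lambda$-inequalities back through the transcendental eigenvalue equation yields the stated thresholds $k > \tan(\pi/(2q))$, $k<-\tan(\pi/(2p))$, and $k>\tan(\pi/(2p))$ respectively. The ODE and separation computation is routine; the main obstacle is this last step---pinning down precisely what admissibility range of $\lambda$ corresponds to a genuine counter-example in the $\dot H^1$--$L^p$ sense (involving nontangential maximal functions of $\nabla u$ or tangential gradient norms at the boundary), and verifying, perhaps after multiplication by a cutoff or superposition of modes so as to control the behaviour at infinity, that the constructed $u$ really lies in the designated solution space while having vanishing boundary trace.
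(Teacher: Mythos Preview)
The paper does not itself prove this theorem; it is quoted from \cite{KKPT} and \cite{KR}, so there is no in-paper argument to compare against. That said, your proposal has a conceptual mismatch with the statement as formulated here. In this paper, ``well posed in the $\dot H^1$ sense'' means: for smooth localised data the Lax--Milgram solution $U$ (which always exists and is unique in $\dot H^1$) satisfies the quantitative bound $\|N_*(U)\|_p\le C\|u\|_p$ (or its analogue for Reg/Neu). Failure therefore means \emph{failure of this estimate}, not \emph{existence of a second solution}. Your homogeneous modes $r^\lambda\phi(\theta)$ are never in $\dot H^1(\R^2_+)$: $|\nabla u|\sim r^{\lambda-1}$ is square-integrable near the origin only for $\lambda>0$ and near infinity only for $\lambda<0$, so no admissible $\lambda$ exists, and attaching a cutoff destroys the equation. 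Hence they cannot contradict Lax--Milgram uniqueness. What such modes \emph{do} witness is non-uniqueness in a larger class such as $L_\infty(L_p)$---precisely the phenomenon exploited in Theorem~\ref{thm:main}---but that is a different assertion from Theorem~\ref{thm:kkprt}.

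The route taken in the cited references, and implicitly in Section~\ref{sec:harmmeas} of this paper, is instead to compute the Poisson kernel of the $\dot H^1$ solution explicitly: with $\tan(\pi\alpha/2)=k$ and the branch $\alpha\in(-1,1)$, one has $P_\alpha(t,x;y)\sim|y|^{-\alpha}$ near $y=0$, which fails the reverse H\"older condition $B_q$ exactly when $\alpha>1/q$, i.e.\ $k>\tan(\pi/(2q))$. Your ODE/separation analysis is the right way to identify the exponents governing this singular behaviour, so it is relevant input; but the argument must be routed through the kernel and the estimate, not through non-uniqueness. Two smaller corrections: the transmission condition carries the opposite sign, $\partial_x U(t,0^+)-\partial_x U(t,0^-)=+2k\,\partial_t U(t,0)$ (cf.\ Section~\ref{sec:harmmeas}), which flips your eigenvalue relation; and your admissibility heuristics for $\lambda$ (e.g.\ ``$\lambda<1/p$ for Dir'') do not correspond to any property of the $\dot H^1$ solution, as that solution is unique regardless of $p$.
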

In this paper, we demonstrate that one must be careful in specifying in what sense well posedness is meant, 
when considering BVP's for non symmetric coefficients.
(The notion of well posedness in the $\dot H^1$ sense is defined below.)
Indeed, Theorem~\ref{thm:main} below shows that these BVP's can be well posed in an $L_\infty(L_p)$ sense,
as defined below, without being well posed in the mentioned $\dot H^1$ sense.

To explain these results, we first need to introduce the notion of $\dot H^1$ solutions and 
$L_\infty(L_p)$ solutions to BVP's.
We consider a given divergence form equation 
\begin{equation}  \label{eq:divform}
  \divv A(x) \nabla U(t,x) =0
\end{equation}
in the upper half plane $\R^2_+ := \sett{(t,x)\in\R^2}{t>0}$, where 
$A= (a_{ij})_{i,j=0,1} \in L_\infty(\R; \mL(\C^2))$ is a $t$-independent, complex and accretive coefficient matrix
such that $\re(A(x)v,v)\ge \kappa |v|^2$, $x\in\R$, $v\in \C^2$, for some $\kappa>0$,
and where $U$ satisfies one of the following prescribed boundary conditions.
\begin{itemize}
\item 
The Dirichlet problem (Dir-$A,p$): $U(0,\cdot)= u$, for a given function $u\in L_p(\R;\C)$.
\item 
The (Dirichlet) regularity problem (Reg-$A,p$): 
$\partial_1U(0,\cdot) = u'$, for a given function $u\in \dot W^1_p(\R;\C)$.
\item
The Neumann problem (Neu-$A,p$):
$a_{00} \partial_0U(0,\cdot)+ a_{01}\partial_1 U(0,\cdot)= \phi$, for a given function $\phi\in L_p(\R;\C)$.
This means that the conormal derivative of $U$ is prescribed. 
\end{itemize}
Throughout this paper, $p$ denotes a fixed exponent such that $1<p<\infty$, and $q$ is the dual exponent.
The regularity and Neumann problems can be thought of as BVP's for the gradient
vector field $F(t,x)= F_0 e_0 + F_1 e_1:= \nabla U(t,x)$, rather than $U$ itself.
Here $e_0$ denotes the vertical basis vector along the $t=x_0$-axis, and $e_1$ is the horizontal basis vector along the $x=x_1$-axis.

There are two classical methods for constructing a solution $U$: Lax--Milgram's lemma and
boundary equation methods.

\vspace{2mm}
\noindent $\dot H^1$ SOLUTIONS

To solve a Dirichlet problem for a sufficiently smooth and localised boundary function $u(x)$, one 
first constructs a function $U_1$ in $\R^2$ such that $U_1(0,x)=u(x)$. Next, one uses Lax--Milgram's lemma to find a function $U_2 \in \dot H^1_0(\R^2_+)$, which decays at infinity, such that
$B(U_2,\psi)= \ell(\psi)$ for all $\psi\in \dot H^1_0(\R^2_+)$, where 
$$
 B(U_2,\psi):= \iint_{\R^2_+}  \big(A(x)\nabla U_2(t,x),\nabla \psi(t,x)\big)\, dtdx
$$
and the given functional is 
$\ell(\psi):= \iint_{\R^2_+}  \big(A(x)\nabla U_1(t,x),\nabla \psi(t,x)\big)\, dtdx$.
Details of this construction for the unbounded domain $\R^2_+$ are found in \cite[Lemma 1.1]{KR}.
The function $U:= U_1-U_2$ now solves equation (\ref{eq:divform}) and has boundary trace $u$,
in a weak sense.

We say that the BVP (Dir-$A,p$) is well posed in the $\dot H^1$ sense, if for all sufficiently smooth and localised $u$, the solution $U$ constructed above has quantitative bounds
\begin{equation}   \label{est:ntdir}
   \| N_*(U) \|_{L_p(\R)} \le C_p \| u \|_{L_p(\R)}.
\end{equation}
Similarly, (Reg-$A,p$) is well posed in the $\dot H^1$ sense, if for all sufficiently smooth and localised 
$u$, the solution $U$ constructed above has quantitative bounds
$$
   \| \wt N_*(\nabla U) \|_{L_p(\R)} \le C_p \| u' \|_{L_p(\R)}.
$$
Here 
$N_*(U)(x_0):= \sup_{|x-x_0|<t}|U(t,x)|$ 
and 
$\wt N_*(F)(x_0):= \sup_{|x-x_0|<t} t^{-1} \|F\|_{L_2(Q(t,x))}$,
where $Q(t,x)$ denotes the square centered at $(t,x)$ with sidelength $t$,
are the standard (modified) non-tangential maximal functions, and $C_p$ denotes a constant independent of $u$.

Turning to the Neumann problem, this is solved for a sufficiently smooth and localised 
boundary function $\phi(x)$ with $\int \phi=0$, by applying the Lax--Milgram lemma to obtain $U\in \dot H^1(\R^2_+)$,
such that $B(U,\psi)= \ell(\psi)$ for all $\psi\in \dot H^1(\R^2_+)$, where
$$
  \ell(\psi)= -\int_{\R}  \phi(x) \psi(x)\, dx.
$$
Details of this construction for the unbounded domain $\R^2_+$ are found in \cite[Lemma 1.2]{KR}.
This function $U$ solves equation (\ref{eq:divform}) and has conormal derivative $\phi$ at the boundary, in a weak sense.

We say that the BVP (Neu-$A,p$) is well posed in the $\dot H^1$ sense, if for all sufficiently smooth and localised $\phi$ with $\int\phi=0$, the solution $U$ constructed above has quantitative bounds
$$
   \|\wt N_*(\nabla U) \|_{L_p(\R)} \le C_p \| \phi \|_{L_p(\R)}.
$$

\vspace{2mm}
\noindent $L_\infty(L_p)$ SOLUTIONS

A different method for constructing a solution $U$ to one of the BVP's above is the boundary equation method. We are given a kernel function $K(t,x;y)$, which for each $y\in\R$ satisfies the equation 
(\ref{eq:divform}) in the variable $(t,x)\in\R^2_+$. From this we obtain, for each auxiliary function $h(y)$ on the 
boundary, a function
$$
  U(t,x) := \int_\R K(t,x;y) h(y)\, dy
$$
solving the equation (\ref{eq:divform}) in $\R^2_+$.
Taking the appropriate trace of $U$, depending on which boundary condition $U$ is supposed to satisfy, we get an equation $g= T(h)$, where $g$ denotes either $u$, $u'$ 
or $\phi$.
If the operator $T: L_p(\R)\rightarrow L_p(\R)$ is an isomorphism, then we can
solve the equation for $h$ and from this construct a solution $U$.

Obviously there is a freedom of choice for the kernel function $K(t,x;y)$. In this paper, we shall use the 
boundary equation method from Auscher, Axelsson and Hofmann~\cite{AAH}.
The Cauchy integral method used here for the Neumann and regularity problems actually uses a 
vector valued kernel $K(t,x;y)$, and constructs
the gradient vector field $F=\nabla U$ rather than $U$ itself.
With some abuse of notation (as the invertibility of the boundary equation may depend on the choice of kernel $K$), we shall say that the BVP's are well posed in the $L_\infty(L_p)$ sense, referring to the norm $\sup_{t>0}\|U(t,\cdot)\|_p$ for solutions, if this method gives rise to an
$L_p$ invertible boundary equation.

In this paper we shall prove the following surprising, in view of Theorem~\ref{thm:kkprt}, result.
\begin{thm}  \label{thm:main}
The boundary equation method of \cite{AAH} yields the following result for coefficients $A_k$.

The Dirichlet problem (Dir-$A_k,p$) is well posed in the $L_\infty(L_p)$ sense if $k\ne \tan(\tfrac \pi{2q})$,
where $1/q=1-1/p$.
In this case, the solution $U_t(x)=U(t,x)$ has bound $\|N_*(U)\|_p \le C\|u\|_p$ and 
convergence $\|U_t-u\|_p\rightarrow 0$ when $t\rightarrow 0^+$.

  The regularity problem (Reg-$A_k,p$) is well posed in the $L_\infty(L_p)$ sense if $k\ne -\tan(\tfrac \pi{2p})$.
In this case, the solution $F_t(x)=F(t,x)$ has bound $\|N_*(F)\|_p \le C\|u'\|_p$ and 
convergence $\|F_t-f\|_p\rightarrow 0$ when $t\rightarrow 0^+$, where $f_1=u'$.

  The Neumann problem (Neu-$A_k,p$) is well posed in the $L_\infty(L_p)$ sense if $k\ne \tan(\tfrac \pi{2p})$. 
In this case, the solution $F_t(x)=F(t,x)$ has bound $\|N_*(F)\|_p \le C\|\phi\|_p$ and 
convergence $\|F_t-f\|_p\rightarrow 0$ when $t\rightarrow 0^+$, where $f_0+k\s(x) f_1=\phi$.
\end{thm}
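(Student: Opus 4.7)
My plan is to exploit two special features of $A_k$. First, writing $A_k=I+k\s(x)J$ with $J$ antisymmetric, a direct calculation gives $\divv(A_k\nabla U)=\Delta U$ away from $x=0$, so the equation reduces to Laplace's equation on each of $\{x>0\}$ and $\{x<0\}$, and all of the content of the equation is carried by interface conditions at $x=0$: continuity of $U$ (equivalently, $F_0$ continuous) together with continuity of the conormal $-k\s(x)F_0+F_1$, which gives the jump $F_1(t,0^+)-F_1(t,0^-)=2kF_0(t,0)$. Second, $A_k$ is invariant under the dilations $(t,x)\mapsto(\lambda t,\lambda x)$, so the boundary operators produced by the method of \cite{AAH} commute with these dilations and are diagonalised by the Mellin transform.

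In polar coordinates $x+it=re^{i\theta}$ the scale-invariant harmonic solutions on the two quarter-planes $\theta\in(0,\pi/2)$ and $\theta\in(\pi/2,\pi)$ take the form $U=r^\alpha\Phi_\pm(\theta)$ with $\Phi_\pm(\theta)=A_\pm\cos(\alpha\theta)+B_\pm\sin(\alpha\theta)$. The two interface conditions at $\theta=\pi/2$ combined with the boundary prescription at $\theta=0,\pi$ give a $4\times 4$ linear system in $(A_\pm,B_\pm)$, and a direct elimination using $1-\cos(\alpha\pi)=2\sin^2(\alpha\pi/2)$ and $\sin(\alpha\pi)=2\sin(\alpha\pi/2)\cos(\alpha\pi/2)$ reduces solvability to the non-vanishing of
\begin{align*}
\Delta_{\mathrm{Dir/Reg}}(\alpha,k) &= 2\sin(\tfrac{\alpha\pi}{2})\bigl(\cos(\tfrac{\alpha\pi}{2})+k\sin(\tfrac{\alpha\pi}{2})\bigr),\\
\Delta_{\mathrm{Neu}}(\alpha,k) &= 2(1+k^2)\sin(\tfrac{\alpha\pi}{2})\bigl(\cos(\tfrac{\alpha\pi}{2})-k\sin(\tfrac{\alpha\pi}{2})\bigr).
\end{align*}
Up to harmless factors this is the Mellin symbol of the boundary operator of \cite{AAH} on the Mellin line $\re\alpha=-1/p$ for Dirichlet and $\re\alpha=1/q$ for regularity and Neumann.

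The decisive step is then to locate the real $k$ for which this symbol vanishes on the relevant line. The factor $\sin(\alpha\pi/2)$ has no zeros on either line for $1<p<\infty$, and an imaginary-part computation confirms that $\cot(\alpha\pi/2)$ is real on each line only at $\im\alpha=0$, where it equals $-\cot(\pi/(2p))$ on $\re\alpha=-1/p$ and $\cot(\pi/(2q))$ on $\re\alpha=1/q$. Using $\cot(\pi/(2p))=\tan(\pi/(2q))$ this recovers exactly the three critical values $\tan(\pi/(2q))$, $-\tan(\pi/(2p))$ and $\tan(\pi/(2p))$ of the theorem. For any other real $k$ the symbol is smooth, bounded, and uniformly bounded away from $0$ on the line, so by a Mikhlin-type bound for the Mellin multiplier (or direct estimation of the explicit kernel) the boundary operator is invertible on $L_p(\R)$ and the BVP is well posed in the $L_\infty(L_p)$ sense.

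Finally, the bound $\|N_*(U)\|_p\le C\|u\|_p$ (with its analogues for $F$) and the convergence $\|U_t-u\|_p\to 0$ are read off the explicit Mellin representation: on each quarter-plane $U(t,\cdot)$ is a modified Poisson-type extension of the data, whose kernel is pointwise controlled by the standard Poisson kernel up to a $k$-dependent constant, whence the non-tangential estimate follows in the usual way and the boundary convergence by density. The main obstacle I foresee is this last step---not the invertibility of the Mellin symbol, which is transparent from the factored formulas above, but the careful verification of the $L_p$-multiplier and non-tangential bounds, which needs some bookkeeping because the kernel is split at $x=0$ and is dilation but not translation invariant.
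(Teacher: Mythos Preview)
Your route is genuinely different from the paper's, and the endpoint (the three critical values of $k$) is correct, but there is a real gap you should not gloss over.

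\textbf{What the paper does.} The paper never writes down a separated-variables $4\times 4$ system. Instead it computes the resolvent $(i\lambda-T_k)^{-1}$ of the first-order operator $T_k$ explicitly as an integral operator, and from this obtains closed formulas for $\s(T_k)$ and $e^{-t|T_k|}\chi_+(T_k)$ via the Dunford integral. With these formulas in hand, the AAH ansatz for each BVP is shown to reduce to a boundary equation $\tfrac12(I\pm kK)\psi=\text{data}$ (or its adjoint), where $K$ is the explicit ``double layer'' operator
\[
Kf(x)=\s(x)\,\tfrac1\pi\,\pv\!\int\frac{f(y)}{x-y}\,dy-\tfrac1\pi\int\frac{f(y)}{|x|+|y|}\,dy.
\]
The invertibility analysis is then done on $K$: conjugating by $f(x)\mapsto e^{t/p}f(e^t)$ (your Mellin transform) turns $K_0$ into a convolution on $\R$, and the paper actually exhibits the inverse $(I-kK_0)^{-1}=\tfrac1{1+k^2}(I+kK_\alpha)$ with $K_\alpha$ another explicit Mellin convolution. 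The non-tangential and trace estimates are obtained directly from the explicit kernel of $e^{-t|T_k|}\chi_+(T_k)$ by splitting at $x=0$ and recognising each piece as a cut-off Poisson or conjugate-Poisson integral.

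\textbf{Where your argument has a gap.} Your separation-of-variables computation produces the determinant of \emph{some} dilation-invariant boundary map, and you then assert that ``up to harmless factors this is the Mellin symbol of the boundary operator of \cite{AAH}''. That identification is exactly the content of the theorem: the statement is about the specific ansatz $F=e^{-t|T_k|}\chi_+(T_k)h$ (and the compressions $\hat N^\pm_A$, $\hut N^-_A$ chosen in \cite{AAH}), not about an abstract scale-invariant solution operator. Different choices of kernel or of complementary subspace can in principle give different boundary operators with different Mellin symbols; the paper itself warns that ``the invertibility of the boundary equation may depend on the choice of kernel $K$''. So you must either (i) compute the AAH operator explicitly and check its symbol, which is essentially the paper's argument, or (ii) give an abstract uniqueness argument showing that any dilation-invariant $L_p$-bounded boundary operator arising from the PDE with the AAH normalisation must have the symbol you found. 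As written, you have proved that the critical $k$'s are the only candidates for failure of \emph{some} scale-invariant scheme, not that the AAH scheme succeeds off them.

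\textbf{What each approach buys.} Your factored symbols $\Delta_{\mathrm{Dir/Reg}}$, $\Delta_{\mathrm{Neu}}$ make the location of the critical $k$ transparent and would generalise more readily to other piecewise-constant coefficients. The paper's explicit computation, on the other hand, directly ties the result to the AAH construction, produces a closed-form inverse (not just a nonvanishing symbol), and gives the non-tangential maximal bound and $L_p$ trace convergence for free from the explicit kernel, which is precisely the step you flag as the ``main obstacle''.
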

The reason for these seemingly contradictory results is that the two methods construct different solutions,
for some $k$. 
To illustrate this phenomenon, we study in detail the solutions to the Dirichlet problem
in section~\ref{sec:harmmeas}.
We derive the following explicit expression for the solution to the Dirichlet problem with the boundary equation method.
\begin{equation}  \label{eq:harmmeas}
  U(t,x)= \frac 1\pi\int_\R \frac{2xty+ |y|^{-\alpha}\im\big\{ (|x|+it)^{\alpha+1}  \big( y^2-(|x|-it)^2\big)  \big\} }
  {(t^2+(x-y)^2)(t^2+(x+y)^2)} u(y) dy,
\end{equation}
where $(t,x)\in\R_+^2$.
Here $\tan(\pi\alpha/2)=k$, and $\alpha\in (1/q-2,1/q)$ is the branch obtained with the boundary equation method. 
Denote the Poisson kernel in (\ref{eq:harmmeas}) by $P_\alpha(t,x;y)$. 
Below the harmonic measures $P_\alpha(0.5, 1; \cdot)$ are plotted for some values of $\alpha$.
\begin{figure}[htbp]
\centerline{\includegraphics[clip,width=17cm]{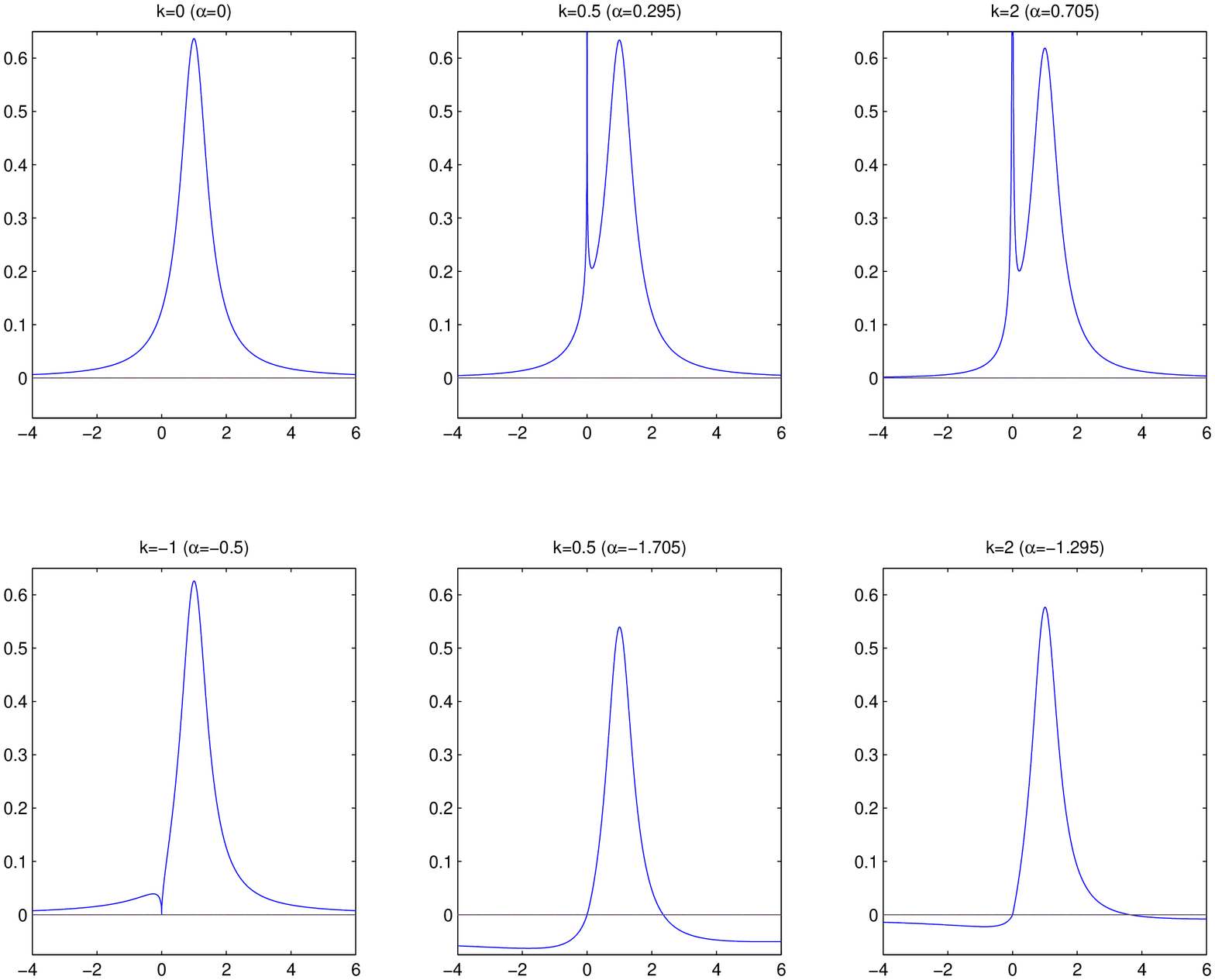}}
\end{figure}
On the other hand, for sufficiently smooth and localised boundary data, the $\dot H^1$ solution to 
the Dirichlet problem is given by (\ref{eq:harmmeas}), but with the branch 
$\alpha\in(-1,1)$. For $k> \tan(\tfrac \pi{2q})$, the $\dot H^1$ solution thus differs from the $L_\infty(L_p)$
solution, and we note the following.
\begin{itemize}
\item  The $\dot H^1$ solution uses the branch $\alpha\in(1/q,1)$. Here the kernel 
$P_\alpha(t,x;y)$ is always positive, but does not satisfy the reverse H\"older estimates $B_q$ equivalent with the estimate (\ref{est:ntdir}), since $P_\alpha(t, x;y)\sim |y|^{-\alpha}$ around $y=0$. 
The solution does not belong to $L_p(\R_x)$ for any fixed $t>0$, because of the slow decay
 $P_\alpha(t, x;y)\sim |x|^{\alpha-1}$ when $x\rightarrow\infty$.
\item 
The $L_\infty(L_p)$ solution uses the branch $\alpha\in(-2+1/q, -1)$. 
Here the kernel $P_\alpha(t,x;y)$ is not always positive, 
but does satisfy the reverse H\"older estimates $B_q$
since also $P_\alpha(t, x;y)\sim |y|^{-\alpha-2}$ when $y\rightarrow\infty$.
The solution is not $\dot H^1$ up to the boundary in a neighborhood of the origin, 
not even for smooth boundary data.
\end{itemize}
The reason why a signed harmonic measure is possible for $\alpha\in(-2+1/q, -1)$, without contradicting the maximum principle, is that the solution, even for $0\le u\in C_0^\infty(\R)$, satisfies
$\lim_{t\rightarrow 0^+} U(t,0)=-\infty$. This prevents the $L_\infty$ approximation
needed for applying the maximum principle. 
Indeed, (\ref{eq:harmmeas}) shows that
\begin{equation}  \label{eq:harmmeasimag}
  U(t,0)= \frac {\cos(\pi\alpha/2)}\pi\int_\R \frac{t^{1+\alpha}|y|^{-\alpha}}
  {t^2+y^2} u(y) dy.
\end{equation}
We summarise the main point of this paper.
When considering BVP's with non symmetric coefficients, it is important to specify which 
solution is meant. 
When the $\dot H^1$ solution to the Dirichlet problem does not satisfy (\ref{est:ntdir}), there can still exist
another solution $U_t(x)= U(t,x)$, which have bounds $\|N_*(U)\|_p <\infty$
and $L_p$ trace $\|U_t-u\|_p\rightarrow 0$, $t\rightarrow 0$.

{\bf Acknowledgments.}
The author thanks Pascal Auscher and 
Steve Hofmann for many interesting discussions on the topic of this paper.
\section{Computation of Cauchy integrals}

In this section we explicitly calculate the basic operators we need in order to solve the BVP's 
with the boundary equation method from \cite{AAH}.
As in \cite[equation (1.5)]{AAH}, we rewrite the equation (\ref{eq:divform}) for $U$ as the equivalent 
first order system $\divv A_k F=0$ and $\curl F=0$ for the vector field $F=\nabla U$.
Solving for the vertical derivative, this first order system reads
\begin{equation}   \label{eq:tsolvedeq}
  \partial_t F + 
     \begin{bmatrix}
       k(\s(x)\partial_x  -\partial_x \s(x)) & \partial_x \\
       -\partial_x & 0
     \end{bmatrix}F = 0.
\end{equation}
Throughout this paper we shall identify $f_0 e_0+ f_1 e_1 = [f_0, f_1]^t$.
The tangential matrix operator in (\ref{eq:tsolvedeq}) will be denoted $T_k$, and is seen to be 
a self-adjoint operator in $L_2(\R)$, with domain
$$
  \dom(T_k):=\sett{f_0e_0+f_1e_1}{f_0 \in H^1(\R), f_1-k\s(x) f_0\in H^1(\R)}.
$$

To solve the BVP's, we need to calculate certain operators in the functional calculus of the 
self-adjoint operator $T_k$, in particular we need the following result.
\begin{thm}   \label{thm:cauchyints}
  The Cauchy (singular) integral operators for $A_k$ are
$$
  \s(T_k) f(x) = \frac 1\pi
\begin{bmatrix}
  -\pv\int \frac{f_1(y)}{x-y} dy  \\
   \pv\int \frac{f_0(y)}{x-y} dy 
\end{bmatrix}
- \frac 1\pi\frac k{1+k^2}
\begin{bmatrix}
  \int \frac{f_0(y) + k\s(y) f_1(y)}{|x|+|y|} dy \\
  \s(x) \int \frac{kf_0(y) - \s(y) f_1(y)}{|x|+|y|} dy 
\end{bmatrix}
$$
and
\begin{multline*}
e^{-t|T_k|}\chi_+(T_k)f(x) =
\frac 1{2\pi}
\begin{bmatrix}
\int\frac{t f_0(y)- (x-y)f_1(y)}{t^2+(x-y)^2} dy \\
\int\frac{t f_1(y)+ (x-y)f_0(y)}{t^2+(x-y)^2} dy 
\end{bmatrix} \\
+\frac 1{2\pi}\frac{k}{1+k^2}
\begin{bmatrix}
-\int\frac{t(k f_0(y)-\s(y) f_1(y))+(|x|+|y|)(f_0(y)+k\s(y) f_1(y)) }{ t^2+(|x|+|y|)^2} dy \\
\s(x)\int\frac{t(f_0(y)+k\s(y) f_1(y))+(|x|+|y|)(-k f_0(y)+\s(y) f_1(y)) }{ t^2+(|x|+|y|)^2} dy 
\end{bmatrix}.
\end{multline*}
\end{thm}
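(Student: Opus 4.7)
My plan is to exploit the fact that on $\R\setminus\{0\}$ the operator $T_k$ coincides with the free Dirac--type operator $\dirac:=\begin{bmatrix} 0 & \partial_x \\ -\partial_x & 0 \end{bmatrix}$, and that the domain condition $f_1 - k\s(x)f_0 \in H^1(\R)$ is nothing but the transmission condition at the origin that $f_0$ is continuous and that $f_1(0^+) - f_1(0^-) = 2k\,f_0(0)$. Hence $T_k$ is the free Dirac operator decorated by a point interaction at $x=0$, a problem that can be handled by explicit ODE analysis together with a Krein--type resolvent identity.

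For $z$ off the spectrum I would construct $(T_k-z)^{-1}$ as a matrix-valued Green's function by solving $(T_k-z)F=G$ separately on $\{x>0\}$ and $\{x<0\}$ from the known free Dirac Green's function, and then matching across $x=0$ by the transmission condition. With $\mu=\sqrt{-z^2}$ chosen in the appropriate half plane this produces a decomposition of the form
$$(T_k-z)^{-1} = (\dirac-z)^{-1} + R_{z,k},$$
where $R_{z,k}$ is a rank--two correction built from the outgoing Dirac plane waves $[1,\mp i]^{t}e^{\pm i\mu x}$ evaluated on either side of the origin, weighted by a reflection matrix rational in $k$. A short linear-algebra step, solving the transmission condition for the reflection and transmission coefficients, produces precisely the scalar $k/(1+k^2)$ that multiplies the second block in each formula of the theorem.

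I would then extract $\s(T_k)$ and $e^{-t|T_k|}\chi_+(T_k)$ by integrating the resolvent: Stone's formula for the former, and the contour integral $\frac{1}{2\pi i}\int_\Gamma e^{-tz}(T_k-z)^{-1}\,dz$ over a loop surrounding the positive spectrum for the latter. The contribution of $(\dirac-z)^{-1}$ reproduces the first matrix in each formula, yielding the principal-value Hilbert kernel $(x-y)^{-1}$ together with the classical Poisson and conjugate Poisson kernels, since the boundary calculus of the free Dirac operator on $\R$ is standard. The contribution of $R_{z,k}$ produces the second matrix: the products $e^{i\mu|x|}e^{i\mu|y|}$ integrate, by elementary Laplace--type identities, to the kernels $(|x|+|y|)^{-1}$ and $t/(t^2+(|x|+|y|)^2)$, while the factor $\s(x)$ that appears in the second row of the reflected block traces back to a factor of $\pm i$ in the Dirac plane wave on the corresponding side of the origin.

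The main technical obstacle I anticipate is not any single step but the careful bookkeeping of signs, of the $\s(x)$ prefactors, and of the four cases $\pm x,\pm y$ in the reflected kernel. Choosing as a local basis on each half-line the incoming and outgoing Dirac plane waves $[1,\mp i]^{t}e^{\pm i\mu x}$ diagonalises the free part and should display the reflection matrix in its cleanest form; once the algebra has been reduced to that basis, the remaining one-dimensional Laplace-type integrals in the spectral variable are routine, and they collapse, after symmetrising over the two outgoing branches, into the symmetric combination $k/(1+k^2)$ together with the two kernels of the theorem.
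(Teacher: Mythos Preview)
Your plan is essentially the paper's own proof: the paper also computes the resolvent $(i\lambda-T_k)^{-1}$ by solving the diagonalised ODE on each half-line and matching via the jump condition $u_1(0^+)-u_1(0^-)=2k\,u_0(0)$ (your ``transmission condition''), obtaining exactly the free-plus-rank-two structure you describe, and then recovers $\s(T_k)$ and $e^{-t|T_k|}\chi_+(T_k)$ from Dunford integrals of the resolvent. The only organisational difference is that the paper takes the contour along the imaginary axis, which packages the spectral integrals through the auxiliary operators $P_t=(1+t^2T_k^2)^{-1}$ and $Q_t=tT_k(1+t^2T_k^2)^{-1}$ and reduces the final step to the single Laplace identity $\int_0^\infty s^{-1}e^{-x/s}e^{it/s}\,ds/s=(x+it)/(x^2+t^2)$, whereas you propose Stone's formula and a loop around the positive spectrum; both routes are valid and lead to the same kernels.
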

Here $\chi_\pm(z)$ denotes the characteristic function of the right/left complex half plane.
We write $\s(z):= \chi_+(z)-\chi_-(z)$  and $|z|:= z\,\s(z)$, for $z\in\C$.
Note that $|z|$ does not denote absolute value for non real $z$.

\begin{lem}   \label{lem:resolvents} 
For non real $i\lambda\in\C$, the resolvent $(i\lambda - T_k)^{-1}f= u$ is given by
\begin{multline*}
\begin{bmatrix}
u_0(x) \\ u_1(x)
\end{bmatrix}
=
\frac {\s \lambda} 2
\begin{bmatrix}
\int e^{-|\lambda(x-y) |} \big( -i f_0(y) +\s(\lambda(x-y)) f_1(y) \big)  dy \\
\int e^{-|\lambda(x-y)|} \big( -\s(\lambda(x-y)) f_0(y) -i f_1(y) \big)   dy 
\end{bmatrix}
\\
+ e^{-|\lambda x|}\tfrac k{2(1- ik\s(\lambda) )}
\begin{bmatrix}
 i \int  e^{-| \lambda y|}  \big( -i f_0(y) -\s(\lambda y) f_1(y) \big)  dy \\
 \s(\lambda x) \int  e^{-| \lambda y|}  \big( -i f_0(y) -\s(\lambda y) f_1(y) \big)  dy 
\end{bmatrix}.
\end{multline*}

%
%
%
%
\begin{comment}
\begin{multline*}
\begin{bmatrix}
u_0(x) \\ u_1(x)
\end{bmatrix}
=
\begin{bmatrix}
\eps\frac i2\int_{\eps (y-x)<0} e^{i\lambda(x-y)} g_0(y)dy -\eps\frac 12\int_{\eps (y-x)>0} e^{-i\lambda(x-y)} g_1(y) dy \\
\eps\frac 12\int_{\eps (y-x)<0} e^{i\lambda(x-y)} g_0(y) dy -\eps\frac i2\int_{\eps (y-x)>0} e^{-i\lambda(x-y)} g_1(y) dy 
\end{bmatrix}
\\
+ e^{\eps i\lambda|x|}\tfrac k{2(1-\eps ik)}
\begin{bmatrix}
-\int_{\eps y<0} e^{-i \lambda y}g_0(y) dy -i \int_{\eps y>0} e^{i\lambda y} g_1(y) dy \\
\eps \s(x)\left( i \int_{\eps y<0} e^{-i \lambda y}g_0(y) dy - \int_{\eps y>0} e^{i\lambda y} g_1(y) dy \right)
\end{bmatrix},
\end{multline*}
where
$
\begin{bmatrix}
g_0 \\ g_1
\end{bmatrix}
:=
\begin{bmatrix}
-1 & -i \\
i & 1
\end{bmatrix}
\begin{bmatrix}
f_0 \\ f_1
\end{bmatrix}.
$
\end{comment}
%
%
%
%
\end{lem}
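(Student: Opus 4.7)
My plan is to treat $T_k$ as a perturbation of the free Dirac operator $D := T_0$ that is localized at the origin, and to solve $(i\lambda - T_k)u = f$ by a matched construction: a particular solution built from the Dirac resolvent kernel on $\R$, plus a bounded homogeneous correction chosen to enforce the transmission conditions encoded in $\dom(T_k)$.

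First I would decode those transmission conditions. From $u_0 \in H^1(\R)$ we get continuity of $u_0$ at $0$, and from $u_1 - k\s(x)u_0 \in H^1(\R)$ we get the jump
$$u_1(0^+) - u_1(0^-) = 2k\, u_0(0).$$
A short distributional calculation then confirms that on $\dom(T_k)$ the delta contributions from $\partial_x u_1$ cancel those from $-k\partial_x(\s(x)u_0)$, so that away from $0$ the resolvent equation reduces to the constant-coefficient Dirac system $(i\lambda - D)u = f$. Diagonalizing that system, with eigenvalues $\pm\lambda$ and eigenvectors $(1,\pm i)^t$, and selecting the branch decaying at $\pm\infty$ via the paper's complex $\s(\cdot)$ and $|\cdot|$ conventions, yields the free Dirac Green's function
$$G_0(x-y) = \frac{\s\lambda}{2}\, e^{-|\lambda(x-y)|}\begin{bmatrix} -i & \s(\lambda(x-y)) \\ -\s(\lambda(x-y)) & -i\end{bmatrix},$$
and the convolution $u_p := G_0 * f$ reproduces the first term in the stated formula.

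Since $u_p \in H^1(\R)^2$ it is continuous across $0$ and therefore fails the required jump. I would repair this by adding a bounded homogeneous correction $v(x) = c\, e^{-|\lambda x|}(1,\, -i\s(\lambda x))^t$, which is the unique bounded solution of $(i\lambda - D)v = 0$ with continuous first component. Imposing the jump condition on $u = u_p + v$ together with the self-consistency $u_0(0) = u_{p,0}(0) + c$ gives $c = \tfrac{ik}{1 - ik\s(\lambda)}\, u_{p,0}(0)$, and pulling the factor $i$ out of the vector $(1,\,-i\s(\lambda x))^t$ so as to rewrite the correction in the form $\tfrac{k}{1-ik\s(\lambda)}(i,\,\s(\lambda x))^t$ reproduces exactly the second term once $u_{p,0}(0)$ is read off from the kernel $G_0$. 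The main obstacle is bookkeeping rather than conceptual: since $\s(\lambda)$ and $|\lambda|$ are complex-valued for non-real $\lambda$, some care is required to keep the formulas uniformly valid for $i\lambda$ in either complex half-plane without splitting into separate cases; once the sign conventions are tracked, the remaining calculations are routine.
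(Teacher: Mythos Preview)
Your approach is essentially identical to the paper's: both reduce $(i\lambda-T_k)u=f$ to the constant-coefficient Dirac system on $\R\setminus\{0\}$, diagonalize it (you via the eigenvectors $(1,\pm i)^t$, the paper via left-multiplication by $M=\begin{bmatrix}-i&1\\1&-i\end{bmatrix}$), integrate to obtain the decaying solutions on each half-line, and then fix the one free constant by the jump condition $u_1(0^+)-u_1(0^-)=2k\,u_0(0)$. Your ``free Green's function plus homogeneous correction'' packaging is the same calculation; the only slip is a missing factor of $\s(\lambda)$ in your expression for $c$ (the correct value is $c=\tfrac{ik\,\s(\lambda)}{1-ik\,\s(\lambda)}\,u_{p,0}(0)$, equivalently $\tfrac{ik}{\s(\lambda)(1-ik\,\s(\lambda))}\,u_{p,0}(0)$ since $\s(\lambda)^2=1$), which is exactly the sign-tracking issue you flag.
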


To prove the lemma, we need to solve $(i\lambda- T_k)u=f$ for $u$. Thus we are looking for $u$ such that
$$
  \begin{cases}
    u_0' = -i\lambda u_1+f_1, \\
    u_1' = i\lambda u_0- f_0,
  \end{cases}
$$
for $x\ne 0$, and where $u\in\dom(T_k)$, i.e. $u_0$ is continuous at $x=0$, whereas 
$$
  u_1(0+) - u_1(0-) = 2k u_0(0).
$$
Multiplying the system of equations with 
$M= [-i, 1; 1, -i]$ gives the diagonal system
$$
  \begin{cases}
    v_0' = -\lambda v_0 + g_0, \\
    v_1' = \lambda v_1 +g_1,
  \end{cases}
$$
for $v= Mu$ and $g := [-1, -i;i, 1]f$.
Integrating these equations and using the jump condition at $x=0$ gives the formula in the lemma.

\begin{lem}   \label{lem:ptqt}
  The operators $P_t=(1+t^2 T_k^2)^{-1}$ and $Q_t = tT_k(1+t^2 T_k^2)^{-1}$, for $t>0$, are
\begin{align*}
P_t f(x) &= 
\begin{bmatrix}
\int\frac 1{2t} \Big( e^{-|x-y|/t} f_0(y) +  \frac k{1+k^2} e^{-(|x|+|y|)/t} \big( -kf_0(y) + \s(y) f_1(y) \big)  \Big)  dy  \\
\int\frac 1{2t} \Big( e^{-|x-y|/t} f_1(y) +  \frac {k\s(x)}{1+k^2} e^{-(|x|+|y|)/t} \big( f_0(y) + k\s(y) f_1(y) \big) \Big)  dy  
\end{bmatrix},
\\
Q_t f(x) &= 
\begin{bmatrix}
\int -\frac 1{2t} \s(x-y) e^{-|x-y|/t} f_1(y) dy  \\
\int\frac 1{2t}  \s(x-y) e^{-|x-y|/t} f_0(y)  dy  
\end{bmatrix} \\
&-
\begin{bmatrix}
\int\frac 1{2t}  \frac k{1+k^2} e^{-(|x|+|y|)/t} \big( f_0(y) + k\s(y) f_1(y) \big)   dy  \\
\int\frac 1{2t}  \frac {k\s(x)}{1+k^2} e^{-(|x|+|y|)/t} \big( kf_0(y) - \s(y) f_1(y) \big)  dy  
\end{bmatrix}.
\end{align*}
\end{lem}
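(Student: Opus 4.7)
The plan is to reduce the computation of $P_t$ and $Q_t$ to the resolvent formula already established in Lemma~\ref{lem:resolvents}. Since $T_k$ is self-adjoint, the functional calculus permits the partial-fraction identities
\begin{equation*}
  \frac{1}{1+s^2}=\tfrac12\bigl((1-is)^{-1}+(1+is)^{-1}\bigr),\qquad
  \frac{s}{1+s^2}=\tfrac1{2i}\bigl((1-is)^{-1}-(1+is)^{-1}\bigr),
\end{equation*}
applied to $s=tT_k$, which give
\begin{equation*}
  P_t=\tfrac12\bigl((1-itT_k)^{-1}+(1+itT_k)^{-1}\bigr),\qquad
  Q_t=\tfrac1{2i}\bigl((1-itT_k)^{-1}-(1+itT_k)^{-1}\bigr).
\end{equation*}

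Next, I would rewrite these factors in resolvent form. The algebraic identities $1-itT_k=it(-i/t-T_k)$ and $1+itT_k=-it(i/t-T_k)$ yield
\begin{equation*}
  (1-itT_k)^{-1}=-\tfrac{i}{t}\,R^-,\qquad (1+itT_k)^{-1}=\tfrac{i}{t}\,R^+,
\end{equation*}
where $R^\pm:=(\pm i/t - T_k)^{-1}$ is the operator supplied by Lemma~\ref{lem:resolvents} with $\lambda=\pm1/t$. Consequently
\begin{equation*}
  P_t=\tfrac{i}{2t}(R^+-R^-),\qquad Q_t=-\tfrac{1}{2t}(R^++R^-).
\end{equation*}

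The remaining step is a direct substitution and bookkeeping. For $\lambda=\pm1/t$ one has $|\lambda(x-y)|=|x-y|/t$, $\s(\lambda(x-y))=\pm\s(x-y)$, $\s(\lambda y)=\pm\s(y)$, $\s(\lambda x)=\pm\s(x)$, and the prefactor $\tfrac12\s\lambda=\pm\tfrac12$, while the boundary correction carries the factor $k/(2(1\mp ik))$. Forming $R^+\pm R^-$, the ``bulk'' integrands in Lemma~\ref{lem:resolvents} collapse: in $R^+-R^-$ the $f_0$ terms add to $-2if_0$ in the first row and the $f_1$ terms cancel (with the roles exchanged in the second row), while in $R^++R^-$ it is the terms with $\s(x-y)$ that survive. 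After multiplication by $i/(2t)$ or $-1/(2t)$ the complex units disappear and one recovers precisely the two real Poisson-type convolution kernels $\frac{1}{2t}e^{-|x-y|/t}$ (for $P_t$) and $\mp\frac{1}{2t}\s(x-y)e^{-|x-y|/t}$ (for $Q_t$) written in the statement. For the boundary correction one clears the denominators by $(1+ik)(1-ik)=1+k^2$; the expansions
\begin{equation*}
  (1+ik)(-if_0-\s f_1)-(1-ik)(-if_0+\s f_1)=2(kf_0-\s f_1),
\end{equation*}
and its companion with a sign change, produce the $k/(1+k^2)$ coefficient and the combinations $-kf_0+\s(y)f_1$, $f_0+k\s(y)f_1$ appearing in the lemma.

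The only real obstacle is this final algebraic combination: one must keep careful track of the sign of $\s(\lambda)$, of the factor $\s(\lambda x)=\pm\s(x)$ in the second row, and of the complex conjugate structure of the two Cauchy denominators $1\mp ik$, so that after summation/subtraction all imaginary parts cancel and the symmetric $k/(1+k^2)$ factor emerges. No new analytic input is needed beyond Lemma~\ref{lem:resolvents}; the result is a finite, purely algebraic verification.
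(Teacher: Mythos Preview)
Your proposal is correct and is exactly the argument the paper gives: the paper simply records the partial-fraction identities
\[
  P_t=\tfrac{1}{2it}\bigl((\tfrac{1}{it}-T_k)^{-1}-(\tfrac{1}{-it}-T_k)^{-1}\bigr),\qquad
  Q_t=-\tfrac{1}{2t}\bigl((\tfrac{1}{it}-T_k)^{-1}+(\tfrac{1}{-it}-T_k)^{-1}\bigr),
\]
which are your formulas $P_t=\tfrac{i}{2t}(R^+-R^-)$ and $Q_t=-\tfrac{1}{2t}(R^++R^-)$ after noting $1/(\pm it)=\mp i/t$, and then invokes Lemma~\ref{lem:resolvents}. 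The only difference is that you spell out the algebraic bookkeeping (the handling of $\s(\lambda)$ and the denominator $1\mp ik$) that the paper leaves implicit.
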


This follows from Lemma~\ref{lem:resolvents} and the formulae
\begin{align*}
  P_t & = \tfrac 1{2it}  (  ( \tfrac 1{it}- T_k )^{-1} - (\tfrac 1{-it} -T_k)^{-1}  ) \text{ and} \\
  Q_t & = -\tfrac 1{2t}  (  ( \tfrac 1{it}- T_k )^{-1} + (\tfrac 1{-it} -T_k)^{-1}  ).
\end{align*}

We are now in position to prove Theorem~\ref{thm:cauchyints}.
As in \cite[Section 2.3]{AAH}, we use the Dunford functional calculus formula
$$
  b(T_k)= \frac 1{2\pi i}\int_\gamma b(\lambda) (\lambda - T_k)^{-1} d\lambda,
$$
where $\gamma$ is the boundary of a double sector around $\R\setminus\{0\}$.
Using $b(z)= \s(z)$ and $b(z)= e^{-t|z|} \chi_+(z)$ respectively, and 
Lemma~\ref{lem:resolvents}, gives the formulae in Theorem~\ref{thm:cauchyints}.
However, the computations can be somewhat simplified by choosing a degenerate
contour of integration along the imaginary axis. 
In this case the Dunford formulae become
\begin{align*}
\s(T_k) &= \frac 2\pi \int_0^\infty Q_s \frac{ds}s, \\
e^{-t|T_k|}\chi_+(T_k) &= \frac 1\pi \int_0^\infty \Big( Q_s \cos(t/s) + P_s \sin(t/s) \Big) \frac{ds}s.
\end{align*}
Changing the order of integration for $s$ and $y$ here, and using that 
$$
  \int_0^\infty \tfrac 1s e^{-x/s} e^{it/s} \tfrac{ds}s = \frac{x+ it}{x^2+t^2},
$$
gives the desired formulae.
Some of the above computations are of course formal. However, they can be justified
for example with arguments as in \cite{McQ}.

\section{Solvability of boundary equations}   \label{sec:inv}

In this section we use the Cauchy integrals from Theorem~\ref{thm:cauchyints} to solve
BVP's, following the boundary equation method described in \cite{AAH}. 
\begin{defn}
  Let $E_k^\pm h := \chi_\pm(T_k)h$ be the {\em Hardy projections}, with the
  associated {\em Cauchy singular integral operator} $E_k h:= \s(T_k)h$.
  Let the {\em Cauchy extension operators} be $(C_k^\pm h)(t,x):= (e^{\mp t|T_k|} E_k^\pm h)(x)$,
  $\pm t>0$.
\end{defn}
Note that $E_k= E_k^+ - E_k^-$ and conversely $E_k^\pm= \tfrac 12(I\pm E_k)$.
Given a function $h:\R\rightarrow\C^2$ on the boundary, applying the Cauchy extension $C_k^+$ gives
a vector field $F(t,x) = C_k^+ h(x)$ in $\R^2_+$.
This is our ansatz for the regularity and Neumann problems. 
Indeed, the vertical derivative of $F= e^{-t|T_k|} E_k^+h$ is
$$
  \pd_t F= -|T_k| e^{-t|T_k|} \chi_+(T_k) h = -T_k (e^{-t|T_k|} \chi_+(T_k) h)= -T_k F.  
$$
Thus $F$ satisfies (\ref{eq:tsolvedeq}), or equivalently the first order system 
$\divv A_k F=0$ and $\curl F=0$. 
This means that $F$ is a gradient vector field $F=\nabla U$, with potential $U$
that solves (\ref{eq:divform}).

On the other hand, to solve the Dirichlet problem, we make use of the fact that, due to the $t$-independence
of the coefficients $A_k$, we have
$$
   0= \partial_t(\divv A_k(x) F )= \divv  A_k(x) (\partial_t F)= 
      \divv A_k(x)\nabla F_0,
$$
i.e. the normal component $F_0= e_0\cdot C_k^+ h$ 
of the Cauchy extension satisfies the equation (\ref{eq:divform}).
This will be our ansatz for the Dirichlet problem.

\begin{lem}   \label{lem:cauchytrace}
  Let $h\in L_p(\R;\C^2)$.
  Then we have bounds $\|E_kh\|_p \le C \|h\|_p$ and
  $\|N_*(C_k^+h)\|_p\le C \|h\|_p$, and convergence
$$
     \| C_k^+ h(t,\cdot) - E_k^+ h\|_p \longrightarrow 0, \qquad t\longrightarrow 0^+.
$$
\end{lem}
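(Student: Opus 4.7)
The plan is to read off the kernels from Theorem~\ref{thm:cauchyints} and decompose every operator as
$$
E_k = E_0 + R, \qquad C_k^+ = C_0^+ + S_t,
$$
where the first summand is (essentially) the classical Hilbert transform / Cauchy extension for the upper half plane, and the second summand collects the ``reflection'' terms carrying the factor $k/(1+k^2)$ and the kernels $\tfrac{1}{|x|+|y|}$, $\tfrac{t}{t^2+(|x|+|y|)^2}$, $\tfrac{|x|+|y|}{t^2+(|x|+|y|)^2}$. For the classical summand, each of the three assertions of the lemma is a standard fact: $L_p$ boundedness of the Hilbert transform, the non-tangential maximal function estimate for the Poisson integral, and $L_p$ convergence of the Poisson integral to the boundary data. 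Hence it suffices to prove the same three facts for the reflection summands.

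For the $L_p$ boundedness of $R$ and $S_t$ (uniformly in $t$), I would split $\R = (-\infty,0] \cup [0,\infty)$ in both variables and reduce to four operators on $L_p(0,\infty)$ with kernel $\tfrac{1}{x+y}$ (for $R$) or $\tfrac{t}{t^2+(x+y)^2}$, $\tfrac{x+y}{t^2+(x+y)^2}$ (for $S_t$), acting between half-lines. The first is the classical Hilbert--Stieltjes kernel on $(0,\infty)$; with the substitution $x=e^u$, $y=e^v$, $f(y)\,dy \mapsto e^{v/p}f(e^v)\,(e^{v/q}dv)$, the operator becomes convolution on $\R$ with the integrable kernel $e^{v/q}/(1+e^v)$, so Young's inequality gives an $L_p$ bound. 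The same substitution applied to the reflection Poisson kernels gives, after normalising $t$, a convolution operator whose kernel has $L_1$ norm independent of $t$, so $\|S_t\|_{L_p\to L_p}$ is bounded uniformly in $t>0$.

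For the non-tangential maximal function bound, I would estimate the sup of the reflection Poisson kernels over cones $|x-x_0|<t$ by a single $t$-independent operator. The key geometric observation is that if $|x-x_0|<t$ then $t^2+(|x|+|y|)^2 \gtrsim \max(t,|x_0|+|y|)^2$, so
$$
\sup_{|x-x_0|<t} \frac{t}{t^2+(|x|+|y|)^2} \lesssim \frac{1}{\max(|x_0|,|y|)+|x_0|+|y|}\cdot \chi_{\{|y|\le 2|x_0|\}} + \text{tail},
$$
and after the logarithmic substitution above this is again a convolution with an integrable kernel on $L_p$; the resulting majorant dominates $N_*(S_t h)(x_0)$ pointwise and is $L_p$ bounded.

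Finally, for the convergence $\|C_k^+ h(t,\cdot)-E_k^+ h\|_p\to 0$, I would argue by density. On a dense class (say $C_c(\R;\C^2)$), pointwise convergence of the kernels at almost every $x$, together with the uniform-in-$t$ pointwise majorant obtained in the previous step, gives convergence via dominated convergence in $L_p$; combined with the uniform $L_p$ operator bound $\|C_k^+(t,\cdot)\|_{L_p\to L_p}\le C$ just proved, the convergence extends to all $h\in L_p$ by the standard $\varepsilon/3$ argument. The main obstacle is the non-tangential maximal estimate for the reflection piece: one must beat the fact that the reflection Poisson kernel does not have the translation invariance that makes the classical case trivial, which is why the logarithmic change of variables and Young's inequality are the right tools.
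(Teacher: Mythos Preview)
Your decomposition $E_k=E_0+R$, $C_k^+=C_0^++S_t$ and the reduction of the reflection pieces to half-line operators is correct in spirit, but the paper's route is considerably shorter. Rather than treating $\tfrac{1}{x+y}$ on $(0,\infty)$ as a genuinely new kernel requiring the Mellin substitution, the paper observes that after splitting $\int_\R=\int_{\R_+}+\int_{\R_-}$ in $y$, the reflection kernels are the \emph{classical} ones applied to reflected inputs: for $x,y>0$ one has $|x|+|y|=x-(-y)$, so e.g.\ $\int_0^\infty \tfrac{t}{t^2+(x+y)^2}f(y)\,dy=\big(\tfrac{t}{t^2+(\cdot)^2}*\tilde f\big)(x)$ with $\tilde f(z):=f(-z)\chi_-(z)\in L_p$. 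Each component of $C_k^+h$ is thus written as $\chi_+(x)\big(\tfrac{t}{t^2+x^2}*\tilde h_1+\tfrac{x}{t^2+x^2}*\tilde h_2\big)+\chi_-(x)\big(\tfrac{t}{t^2+x^2}*\tilde h_3+\tfrac{x}{t^2+x^2}*\tilde h_4\big)$ for suitable $\tilde h_i\in L_p$, and all three assertions follow at once from the standard theory of the Poisson and conjugate Poisson extensions on $\R$; in particular the non-tangential maximal estimate, the most delicate step in your sketch, comes for free via the Hardy--Littlewood maximal function. One small inaccuracy in your argument: after the logarithmic change of variables the kernel $\tfrac{t}{t^2+(x+y)^2}$ does \emph{not} become a convolution on $\R$ (even after scaling $t$ to $1$, the transformed kernel $\tfrac{e^{u/p+v/q}}{1+(e^u+e^v)^2}$ depends on $u$ and $v$ separately, not only on $u-v$); the uniform-in-$t$ $L_p$ bound is nonetheless immediate from the pointwise majorant $\tfrac{t}{t^2+(x+y)^2}\le\tfrac{1}{2(x+y)}$, which is the Stieltjes kernel you already controlled.
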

\begin{proof}
By breaking up the integrals $\int_\R=\int_{\R_+}+ \int_{\R_-}$ for the second terms in
Theorem~\ref{thm:cauchyints}, we see that we can write each component of $C_k^+h$
as
$$
  \chi_+(x)\Big( \tfrac t{t^2+x^2}* \tilde h_1(x) +  \tfrac x{t^2+x^2}* \tilde h_2(x)  \Big)
  +  \chi_-(x)\Big( \tfrac t{t^2+x^2}* \tilde h_3(x) +  \tfrac x{t^2+x^2}* \tilde h_4(x)  \Big),
$$
for some $L_p$ functions $\tilde h_i$.
Similarly $E_k h$ is expressed in terms of the Hilbert transform. 
The lemma now follows from well known $L_p$
bounds and convergence for these convolution operators.
\end{proof}

\begin{defn}
Let the {\em double layer potential type operator} for $A_k$ be
$$
  Kf(x) := \s(x)\frac 1\pi \pv\int\frac{f(y)}{x-y} dy- \frac 1\pi\int \frac{f(y)}{|x|+|y|} dy,
$$
acting boundedly in $L_p(\R;\C)$.
\end{defn}
\begin{prop}   \label{prop:bdyinteqs}
  The boundary equation method from \cite{AAH}
constructs solutions to the BVP's as follows.
\begin{itemize}
\item 
A solution to the Neumann problem (Neu-$A_k,p$) is given by 
$$\nabla U(t,x)= C_k^+ \begin{bmatrix} \psi & 0 \end{bmatrix}^t,$$
where
$
  \phi = \tfrac 12(I+k K)\psi.
$
\item 
A solution to the regularity problem (Reg-$A_k,p$) is given by 
$$\nabla U(t,x)= C_k^+ \begin{bmatrix} -k\s(x) \psi(x) & \psi(x) \end{bmatrix}^t,$$
where
$
  \s(x) u'(x) = \tfrac 12(I- k K)(\s(y)\psi(y)).
$
\item 
A solution to the Dirichlet problem (Dir-$A_k,p$) is given by 
$$U(t,x)= e_0\cdot C_k^+ \begin{bmatrix} \psi(x) & k\s(x) \psi(x) \end{bmatrix}^t,$$
where
$
  u = \tfrac 12(I+k K)^*\psi.
$
\end{itemize}
\end{prop}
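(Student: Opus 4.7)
The plan is as follows. The discussion preceding the proposition already establishes that for any $h\in L_p(\R;\C^2)$ the Cauchy extension $C_k^+h$ solves the first order system \eqref{eq:tsolvedeq}, so its components are the derivatives of a potential $U$ solving \eqref{eq:divform}, and that in the Dirichlet ansatz $(C_k^+h)_0$ itself solves \eqref{eq:divform} by the $t$-independence argument. What remains in each of the three items is purely a trace computation: by Lemma \ref{lem:cauchytrace} the $L_p$-trace at $t=0^+$ equals $E_k^+h=\tfrac12(I+E_k)h$, so I would substitute the prescribed $h$ into the explicit formula for $E_kh$ from Theorem \ref{thm:cauchyints} and collect terms.

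For the Neumann ansatz $h=[\psi,0]^t$, I would form the conormal combination $F_0(0,\cdot)+k\s(x)F_1(0,\cdot)$. The principal-value piece in the first slot of $E_kh$ vanishes because $f_1=0$, while the two $(|x|+|y|)^{-1}$ contributions combine with a total factor $1+k^2$ that exactly cancels the prefactor $k/(1+k^2)$, producing $\tfrac12(I+kK)\psi$. For the regularity ansatz $h=[-k\s(x)\psi,\psi]^t$ the key algebraic feature is that $f_0(y)+k\s(y)f_1(y)\equiv 0$, which wipes out the first-slot $(|x|+|y|)^{-1}$ term altogether; the tangential trace then reads $F_1(0,x)=\tfrac12\psi-\tfrac{k}{2}\s(x)K(\s\psi)$, and multiplying by $\s(x)$ (using $\s(x)^2=1$) yields $\s(x)u'(x)=\tfrac12(I-kK)(\s(y)\psi(y))$. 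For the Dirichlet ansatz $h=[\psi,k\s(x)\psi]^t$ it is instead the combination $f_0(y)+k\s(y)f_1(y)=(1+k^2)\psi(y)$ that cancels the denominator $1+k^2$, while $kf_0(y)-\s(y)f_1(y)\equiv 0$ kills the rest; the first component of $\tfrac12(I+E_k)h$ then collapses to $\tfrac12\psi+\tfrac12 kK^*\psi$ once one recognises the resulting kernel as that of the adjoint $K^*$ (swap $x$ and $y$ and use the symmetry of $|x|+|y|$), giving $u(x)=U(0,x)=\tfrac12(I+kK)^*\psi$.

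There is no conceptual obstacle here; the proof is a bookkeeping exercise in which the only thing to watch is how each specific linear combination of the entries of $h$ interacts with the factor $k/(1+k^2)$ in Theorem \ref{thm:cauchyints}. The three ans\"atze have in fact been engineered precisely so that those interactions produce clean cancellations, which is what reduces the two-component boundary operator $\tfrac12(I+E_k)$ to the scalar operator $\tfrac12(I\pm kK)$, or its adjoint in the Dirichlet case.
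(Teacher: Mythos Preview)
Your proposal is correct and follows exactly the same approach as the paper: invoke Lemma~\ref{lem:cauchytrace} to identify the trace as $\tfrac12(I+E_k)h$, then plug each ansatz into the explicit formula for $E_k$ from Theorem~\ref{thm:cauchyints} and track the algebraic cancellations. The paper only writes out the Neumann calculation and dismisses the other two as ``similar calculations''; your description of the specific cancellations (the vanishing of $f_0+k\s(y)f_1$ in the regularity case, the vanishing of $kf_0-\s(y)f_1$ and the $(1+k^2)$ collapse in the Dirichlet case, and the identification of $K^*$ via kernel transposition) is accurate and in fact more detailed than what the paper records.
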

\begin{proof}
Lemma~\ref{lem:cauchytrace} shows that the
Cauchy extension has trace 
$\lim_{t\rightarrow 0^+}C_k^+ h= \tfrac 12(h+E_k h)$.
To solve the Neumann problem, we write $h= \begin{bmatrix} \psi & 0 \end{bmatrix}^t$
for the given ansatz.
Using the first formula in Theorem~\ref{thm:cauchyints} for $E_k$, 
the Neumann boundary condition becomes
\begin{multline*}
  \phi= e_0\cdot \big( A_k \,\tfrac 12(h+ E_k h) \big) \\
  =  \tfrac 12\begin{bmatrix} 1 & k\s(x)\end{bmatrix}
    \left( \begin{bmatrix} \psi \\ 0 \end{bmatrix}
    +\frac 1\pi \begin{bmatrix} 0 \\ \pv \int \tfrac{\psi(y)}{x-y} dy  \end{bmatrix}
    -\frac 1\pi\frac k{1+k^2}  
     \begin{bmatrix} \int \tfrac{\psi(y)}{|x|+|y|}dy  \\ \s(x) \int \tfrac{k\psi(y)}{|x|+|y|}dy   \end{bmatrix}
      \right)\\
    = \tfrac 12( \psi+ k K\psi ),
\end{multline*}
as stated.
Similar calculations for the Dirichlet boundary conditions give the stated boundary equations.
\end{proof}
\begin{rem}
For motivation of the choices made for the ansatzes, we refer to \cite{AAH}. In the language used
there, we here express $\hat N_A^- E^+_A \hat N^-_A$, $\hat N_A^+ E^+_A \hat N^+_A$
and $\hut N_A^- E^+_A \hut N^-_A$ respectively in terms of the operator $K$.
The idea is to compress the Cauchy integral to a suitable subspace in $L_p(\R)$, which is
complementary to the null space of the boundary condition under consideration.
We use the projections 
$\hat N_A^- =  [ 1, k\s(x) ; 0, 0 ]$,
$\hat N_A^+ =  [ 0, -k\s(x) ; 0, 1 ]$,
$\hut N_A^- =  [ 1, 0; k\s(x), 0 ]$,
which project onto the complementary subspace, along the null space.
\end{rem}
To prove Theorem~\ref{thm:main}, we see that it suffices to investigate the $L_p$
spectrum of $K$.
We first note that $K= K_0\oplus K_0$ in the splitting 
$L_p(\R)= L_p(\R_+)\oplus L_p(\R_-)$, where $K_0$ is the operator in the 
following proposition.
\begin{prop}  \label{prop:invofdlp}
The operator 
$$
  K_\alpha f(x) :=\frac 2\pi\pv\int_0^\infty \frac {x^\alpha y^{1-\alpha}}{x^2-y^2} f(y) dy
$$ 
is bounded on $L_p(\R_+;\C)$ if and only if $\alpha\in (-1/p, 2-1/p)$.
Furthermore, if  $\alpha\in (-1/p, 2-1/p)$ and $k= \tan(\tfrac \pi 2 \alpha)$, then
$I- k K_0$ is invertible in $L_p$ with inverse
$$
  (I-k K_0)^{-1} = \tfrac 1{1+k^2} (I+ k K_\alpha).
$$
\end{prop}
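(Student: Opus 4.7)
The plan is to use Mellin analysis to reduce the operator identity to an elementary trigonometric identity, while boundedness follows by conjugating $K_\alpha$ to a standard convolution operator on $\R$.

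For the boundedness range, I would first change variables $y=xs$ to rewrite
\[
  K_\alpha f(x) = \frac{2}{\pi}\,\pv\!\int_0^\infty \frac{s^{1-\alpha}}{1-s^2}\,f(xs)\,ds,
\]
exhibiting $K_\alpha$ as a principal value multiplicative convolution on $\R_+$. The isometric identification $g(u) := e^{u/p}f(e^u)$ of $L_p(\R_+,dx)$ with $L_p(\R,du)$ conjugates $K_\alpha$ to convolution on $\R$ with kernel $\tilde m(w) = \frac{2}{\pi}\,\frac{e^{-w(2-\alpha-1/p)}}{1-e^{-2w}}$. A Laurent expansion at $w=0$ gives $\tilde m(w) = \frac{1}{\pi w} + O(1)$, while exponential decay of $\tilde m$ as $w\to\pm\infty$ holds precisely when $\alpha+1/p>0$ and $2-\alpha-1/p>0$. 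In this range convolution with $\tilde m$ is bounded on $L_p(\R)$ (the kernel is a Hilbert-type principal value plus an integrable remainder at infinity), yielding boundedness of $K_\alpha$ on $L_p(\R_+)$. For necessity, testing on $f=\chi_{[1,2]}$ gives $K_\alpha f(x)$ of order $x^\alpha$ as $x\to 0^+$ and of order $x^{\alpha-2}$ as $x\to\infty$, which fails to lie in $L_p$ when $\alpha\le -1/p$ or $\alpha\ge 2-1/p$ respectively.

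For the inversion formula, I would identify the Mellin symbol of $K_\alpha$ by applying it to the homogeneous function $y\mapsto y^{-z}$. The classical evaluation
\[
  \pv\!\int_0^\infty \frac{s^{\beta-1}}{1-s^2}\,ds = \frac{\pi}{2}\cot(\pi\beta/2),\qquad 0<\re\beta<2,
\]
gives $K_\alpha\,y^{-z} = -\cot\!\big(\pi(z+\alpha)/2\big)\,y^{-z}$. With $\phi:=\pi z/2$ and $k=\tan(\pi\alpha/2)$, the inversion formula then reduces at the symbol level to the identity
\[
  \bigl(1+k\cot\phi\bigr)\bigl(1-k\cot(\phi+\pi\alpha/2)\bigr) = \sec^2(\pi\alpha/2) = 1+k^2,
\]
which is a direct expansion using the addition formulas for sine and cosine.

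The main obstacle is promoting this symbol identity to a genuine operator identity on $L_p(\R_+)$ for all $1<p<\infty$. Since both $I-kK_0$ and $I+kK_\alpha$ are bounded on $L_p$ by the first step, it suffices to verify the identity on a dense subspace. I would take $f\in C_c^\infty((0,\infty))$, whose Mellin transform $\hat f(z)=\int_0^\infty f(y)y^{z-1}\,dy$ is entire and rapidly decaying on every vertical line. Mellin inversion $f(y)=(2\pi i)^{-1}\int_{\re z=1/p}\hat f(z)y^{-z}\,dz$ together with the symbol identity then gives the pointwise equality $(I-kK_0)(I+kK_\alpha)f = (1+k^2)f$ on this dense class, which extends to all of $L_p$ by continuity.
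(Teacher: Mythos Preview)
Your proof is correct and follows essentially the same route as the paper. Both arguments conjugate $K_\alpha$ via the isometry $f(x)\mapsto e^{u/p}f(e^u)$ to a convolution operator on $\R$ with kernel $\tfrac{2}{\pi}\,e^{(\alpha+1/p)w}/(e^{2w}-1)$, read off the boundedness range from the exponential behaviour at $\pm\infty$, and then verify the inversion formula at the level of (Fourier/Mellin) symbols; your Mellin presentation and the paper's Fourier-after-conjugation presentation are the same computation in different notation, and your trigonometric identity $(1+k\cot\phi)(1-k\cot(\phi+\pi\alpha/2))=1+k^2$ is equivalent to the paper's identity in the variable $z=e^{\pi(\xi+i/p)}$.
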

\begin{proof}
We use the isometry
$$
  U: L_p(\R_+)\longrightarrow L_p(\R): f(x) \longmapsto e^{t/p} f(e^t).
$$
A calculation shows that $UK_\alpha U^{-1} = \wt K_{\alpha+1/p}$ for the convolution operator
$$
  \wt K_\gamma g := \frac 2\pi\pv\frac{e^{\gamma t}}{e^{2t}-1} * g.
$$
By standard singular integral theory $\wt K_\gamma$ is bounded, on all $L_p$ spaces, 
if and only if $\gamma\in (0,2)$. This proves the boundedness result for $K_\alpha$.

To verify the inverse relation, we need to show that
$$
  (I-k\wt K_{1/p})(I+ k \wt K_{\alpha+1/p}) = 1+k^2.
$$
Applying the Fourier transform, this amounts to 
$$
  \left(  1-  ki\frac{1+z}{1-z}  \right) \left( 1+ ki\frac{1+ z e^{i\pi \alpha}}{1- z e^{i\pi\alpha}} \right)= 1+k^2,
$$
where $z:= e^{\pi(\xi + i/p)}$.
This is verified using the relation $e^{i\pi\alpha}= (1+ik)/(1-ik)$.
\end{proof}
\begin{proof}[Proof of Theorem~\ref{thm:main}]
  Proposition~\ref{prop:invofdlp} shows in particular that $I-k K_0$ is invertible in $L_p(\R)$
  if $k\ne -\tan(\tfrac \pi {2p})$. 
  Therefore the boundary equations derived in Proposition~\ref{prop:bdyinteqs} are invertible
  under the hypotheses in Theorem~\ref{thm:main}.
  The bounds and convergence of the solutions follow from Lemma~\ref{lem:cauchytrace}.
\end{proof}

\section{The harmonic measure}   \label{sec:harmmeas}

In Proposition~\ref{prop:bdyinteqs}, the kernels of the solution operators can now be calculated.
We here only calculate the harmonic measure $P_\alpha(t,x;\cdot)dy$, where $P_\alpha$ denotes the Poisson kernel, i.e. the kernel of the solution operator for the Dirichlet problem. 
The regularity and Neumann problems can be further studied in much the same way.
According to Proposition~\ref{prop:bdyinteqs} and Theorem~\ref{thm:cauchyints}, a solution to the Dirichlet problem is 
$$
  U(t,x)= \frac 1{2\pi}\int\left( \frac{t-(x-y)k\s(y)}{t^2+ (x-y)^2} - k\frac{|x|+|y|}{t^2+(|x|+|y|)^2} \right) \psi(y) dy,
$$
with
\begin{equation}  \label{eq:utoh}
  \psi(\pm y)= \frac 2{1+k^2}\left( u(\pm y)-k \frac 2\pi\pv\int_0^\infty \frac{y^{1+\alpha}z^{-\alpha}}{y^2-z^2}  u(\pm z) dz  \right), \qquad y>0,
\end{equation}
where $k=\tan(\tfrac\pi 2\alpha)$ and 
$\alpha\in(1/q-2, 1/q)$, according to Proposition~\ref{prop:invofdlp}.

Composing these two operators yields the formula (\ref{eq:harmmeas}) for the harmonic measure.
The calculation make use of the following residue calculus formula.
\begin{lem}
  For $\alpha\in(-2,1)$ and $t,z>0$, write
$$
  I_{\alpha,\beta, \gamma}(t,x,z) := \pv\int_0^\infty \frac{\gamma t+\beta(y-x)}{t^2+(y-x)^2}\frac{y^{1+\alpha}}{y^2-z^2} dy.
$$
Then, with $\arg(x+it)\in(0,\pi)$, we have
\begin{multline*}
   2 \tfrac k\pi z^{-\alpha} I_{\alpha,\beta,\gamma}(t,x,z)= 
   \frac{k^2-1}2\frac{\gamma t-\beta(x-z)}{t^2+(x-z)^2} 
   -\frac{k^2+1}2\frac{\gamma t-\beta(x+z)}{t^2+(x+z)^2} \\
   -\re\left(  \frac{(1-ik)^2(\beta-i\gamma)(x+it)^{1+\alpha}}{(x+it)^2-z^2}  \right) z^{-\alpha}.
\end{multline*}
\end{lem}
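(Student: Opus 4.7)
The plan is to rewrite $I_{\alpha,\beta,\gamma}$ as the real part of a single holomorphic integral and evaluate that integral by the residue theorem. With $w := x + it$ (so that $\arg w \in (0,\pi)$), the identity
$$
\frac{\gamma t + \beta(y - x)}{t^2 + (y - x)^2} = \re\!\left(\frac{\beta - i\gamma}{y - w}\right)
$$
packages the left-hand side as $\re[(\beta - i\gamma) J(w)]$, where
$$
J(w) := \pv\!\int_0^\infty \frac{y^{1+\alpha}}{(y - w)(y^2 - z^2)}\, dy.
$$
The task then becomes identifying $\tfrac{2k}{\pi} z^{-\alpha}\re[(\beta - i\gamma)J(w)]$ with the three explicit terms in the asserted formula.

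To compute $J(w)$ I would apply the residue theorem to
$$
g(\zeta) := \frac{\zeta^{1+\alpha}}{(\zeta - w)(\zeta^2 - z^2)}
$$
on the keyhole contour around $[0,\infty)$, using the branch of $\zeta^{1+\alpha}$ with $\arg\zeta \in (0,2\pi)$. The hypothesis $\alpha \in (-2,1)$ ensures the large and small circles contribute nothing in the limit. The interior of the contour contains the simple poles at $\zeta = w$ (upper half plane) and at $\zeta = -z$ (whose residue carries the factor $e^{i\pi\alpha}$ from the branch), while the pole at $\zeta = z$ lies on the cut and is absorbed into the principal value via semicircular indentations, one above on the upper edge and one below on the lower edge. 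Since $\zeta^{1+\alpha}$ differs by $e^{2\pi i\alpha}$ across the cut, the two straight edges combine to $(1 - e^{2\pi i\alpha})\pv J(w)$ plus the indentation contributions, and solving the resulting linear equation for $J(w)$ produces a sum of three terms with denominators $w - z$, $w + z$, and $w^2 - z^2$.

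The rest is algebraic identification. Multiplying by $(\beta - i\gamma)$ and by $\tfrac{2k}{\pi} z^{-\alpha}$, then taking real parts term by term, uses the trigonometric identities $k/\sin(\pi\alpha) = (1+k^2)/2$, $k\cot(\pi\alpha) = (1-k^2)/2$, and $(1 - ik)^2 = e^{-i\pi\alpha}(1+k^2)$. The $1/(w - z)$ and $1/(w + z)$ pieces give precisely the two Cauchy-type terms at $y = z$ and $y = -z$ on the right-hand side, with the factors $(k^2-1)/2$ and $-(k^2+1)/2$ falling out, while the third piece reproduces the advertised $\re$-expression involving $(1-ik)^2 (x+it)^{1+\alpha}/((x+it)^2 - z^2)$. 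The main obstacle I anticipate is keeping all the branch-cut bookkeeping consistent: the sign of each indentation at $\zeta = z$, the identification $(-z)^{1+\alpha} = -z^{1+\alpha} e^{i\pi\alpha}$, and the agreement of the $\arg\in(0,2\pi)$ branch of $w^{1+\alpha}$ with the $\arg\in(0,\pi)$ branch of $(x+it)^{1+\alpha}$ used in the statement.
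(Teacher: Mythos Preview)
Your approach is correct and is exactly what the paper intends: the lemma is stated without proof and is described only as a ``residue calculus formula,'' so your keyhole-contour computation of $J(w)=\pv\int_0^\infty \frac{y^{1+\alpha}}{(y-w)(y^2-z^2)}\,dy$ followed by the algebraic identifications via $k=\tan(\pi\alpha/2)$ is precisely the omitted argument. The trigonometric identities you list are the right ones, and the branch-cut bookkeeping you flag (indentations at $\zeta=z$, the phase of $(-z)^{1+\alpha}$, and the consistency of the two branch conventions for $w^{1+\alpha}$) are indeed the only places where care is required.
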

From section~\ref{sec:inv} it is clear that $u\mapsto \psi\mapsto U$ gives a solution to
(Dir-$A_k,p$) when $\alpha\in(1/q-2,1/q)$, and that $U_t\rightarrow u$ in $L_p(\R)$ when
$t\rightarrow 0^+$.
We now investigate the Poisson integral formula (\ref{eq:harmmeas}) for more general
$\alpha$.
First note that $P_\alpha(t,x;\cdot)$ is not locally $L_1$ if $\alpha\ge 1$, and if $\alpha\le -2$
then it does not decay at $\infty$. For these reasons, we only consider $\alpha\in(-2,1)$.
We shall now consider the branch $\alpha\in(-1,1)$ in (\ref{eq:harmmeas}) and 
show that this gives the $\dot H^1$ solution to (Dir-$A_k,p$).
For the rest of the section, we assume that $u\in C^\infty_0(\R)$ and $\alpha\in(-1,1)$.

From (\ref{eq:utoh}), it is seen that $\psi\in \dot H^{1/2}(\R)$ if $u\in C^\infty_0(\R)$.
Moreover, we shall use the fact that $\tilde \psi:= \psi- \psi(0) e^{-|x|}$ satisfies 
$\tilde \psi\in \dot H^{1/2}(\R)$ and has bounds 
$|\tilde\psi(x)|\le C\min(|x|, 1/|x|)^\gamma$ for some $\gamma>0$.
\begin{lem}
  If $u\in C^\infty_0(\R)$ and $\alpha\in(-1,1)$, then 
$$
  \iint_{\R^2_+} |\nabla U(t,x)|^2 dtdx <\infty.
$$
\end{lem}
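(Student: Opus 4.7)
Following the remarks preceding the lemma, I split $\psi = \tilde\psi + \psi(0) e^{-|x|}$ and correspondingly $U = U_{\tilde\psi} + \psi(0) U_e$, where each summand is obtained by applying the representation
$$V(t,x) = \frac{1}{2\pi}\int \frac{t-(x-y)k\s(y)}{t^2+(x-y)^2}\, v(y)\, dy - \frac{k}{2\pi}\int \frac{|x|+|y|}{t^2+(|x|+|y|)^2}\, v(y)\, dy$$
to $v=\tilde\psi$ and $v=e^{-|\cdot|}$ respectively. By linearity it suffices to bound $\iint|\nabla U_{\tilde\psi}|^2$ and $\iint|\nabla U_e|^2$ separately. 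I further decompose $V$ into three pieces: (i) the Poisson extension of $v$, (ii) the conjugate-Poisson extension of $-k\s(\cdot) v$, and (iii) the reflection integral $R_tv(x) := \tfrac{k}{2\pi}\int \tfrac{|x|+|y|}{t^2+(|x|+|y|)^2} v(y)\, dy$.

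\textbf{The Poisson and conjugate-Poisson pieces.} Terms (i) and (ii) are handled by Plancherel: the Dirichlet energy of a Poisson or conjugate-Poisson extension equals a constant times the $\dot H^{1/2}(\R)$-seminorm squared of its trace. For $v=\tilde\psi$, the trace is in $\dot H^{1/2}$ by hypothesis, and since $\tilde\psi(0)=0$ with Hölder vanishing $|\tilde\psi(x)|\le C|x|^\gamma$ at the origin, multiplication by $\s$ does not introduce a jump and $\s(\cdot)\tilde\psi$ remains in $\dot H^{1/2}$. For $v=e^{-|\cdot|}$ the same is immediate since $e^{-|y|}$ and $\s(y)e^{-|y|}$ are Lipschitz with exponential tails.

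\textbf{The reflection piece.} Term (iii) is the delicate one. Fold $\int_\R = \int_{\R_+} + \int_{\R_-}$ and substitute $r=|x|$, $s=|y|$; this rewrites $R_tv$ as $w(t,r) := \int_0^\infty \frac{r+s}{t^2+(r+s)^2}\, g(s)\, ds$ on the quarter-plane, where $g(s):=v(s)+v(-s)$. For $v=\tilde\psi$, the hypothesis yields $|g(s)|\le C\min(s, s^{-1})^\gamma$, and a direct computation bounds $|\nabla w(t,r)|$ pointwise by $\int_0^\infty \frac{|g(s)|}{t^2+(r+s)^2}\, ds$. A dyadic decomposition of $(t,r)\in\R^2_+$, combined with the two-sided bound on $g$, then shows $\iint_{\R^2_+}|\nabla w|^2<\infty$. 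For $v=e^{-|\cdot|}$, $g(s)=2e^{-s}$ and the same estimate is elementary.

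\textbf{Main obstacle.} The decisive step is the reflection estimate: unlike (i) and (ii), the operator $R_t$ is not a translation-invariant convolution, so Fourier methods do not apply directly. The argument essentially uses \emph{both} halves of the bound $|\tilde\psi(x)|\le C\min(|x|,1/|x|)^\gamma$: without the vanishing of $\tilde\psi$ at the origin the gradient of $R_t\tilde\psi$ would fail to be $L^2$ near $(0,0)$, and without the polynomial decay at infinity it would fail near $x=\infty$. This is precisely consistent with the paper's subsequent observation that, for $\alpha\in(-2+1/q,-1)$, the $L_\infty(L_p)$ solution is \emph{not} $\dot H^1$ near the origin even for smooth data.
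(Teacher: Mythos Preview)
Your decomposition into the three pieces (i), (ii), (iii) is natural, and the treatment of $v=\tilde\psi$ is essentially correct. The gap is in the $v=e^{-|\cdot|}$ contribution. You assert that $\s(y)e^{-|y|}$ is Lipschitz, but it is not: it jumps from $-1$ to $+1$ at $y=0$, and a function with a jump discontinuity is never in $\dot H^{1/2}(\R)$ (the double integral $\iint|f(x)-f(y)|^2/|x-y|^2\,dx\,dy$ diverges logarithmically near the jump). Consequently the conjugate-Poisson piece (ii) for $v=e^{-|\cdot|}$ has \emph{infinite} Dirichlet energy. The reflection piece (iii) for $v=e^{-|\cdot|}$ is equally bad: with $g(s)=2e^{-s}$ one computes $w(0,r)=\tfrac{k}{\pi}\int_0^\infty \tfrac{e^{-s}}{r+s}\,ds\sim -\tfrac{k}{\pi}\log r$ as $r\to 0^+$, so $|\nabla w|\sim (t^2+r^2)^{-1/2}$ near the origin and $\iint|\nabla w|^2=\infty$. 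The singularities of (ii) and (iii) cancel in the sum, but your splitting destroys this cancellation, and your pointwise bound $|\nabla w|\le\int|g(s)|/(t^2+(r+s)^2)\,ds$ cannot detect it.

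The paper circumvents this by never separating the convolution part from the reflection part. It works in the functional calculus of the self-adjoint operator $T_k$: writing $h=[\psi,\,k\s(x)\psi]^t$ one has $\nabla U=-E_k^+|T_k|e^{-t|T_k|}h$, and $L_2$ quadratic estimates reduce the problem to bounding $\int_0^\infty\|h-P_th\|_2^2\,dt/t^2$ with $P_t=(1+t^2T_k^2)^{-1}$. The explicit formula for $P_t$ (Lemma~\ref{lem:ptqt}) shows that the second component of $h-P_th$ equals $k\bigl(\s(x)\psi-p_t\ast(\s(x)\psi)-2t(\psi,p_t)\,\s(x)p_t\bigr)$, where $p_t(x)=\tfrac{1}{2t}e^{-|x|/t}$. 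For $\psi=2\psi(0)p_1$ the extra term $-2t(p_1,p_t)\,\s(x)p_t$ carries a jump at $x=0$ matching that of $\s(x)p_1$, and an explicit calculation gives $\|\cdots\|_2^2\le C\min(t^2,1)$, which is integrable against $dt/t^2$. In your language, the $T_k$-adapted resolvent packages pieces (ii) and (iii) together so that the cancellation is built in; to repair your argument you would have to treat (ii)$+$(iii) jointly for $v=e^{-|\cdot|}$ and exhibit this cancellation by hand.
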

\begin{proof}
Write $h:= \begin{bmatrix} \psi(x) & k\s(x) \psi(x) \end{bmatrix}^t$ for the ansatz in
Proposition~\ref{prop:bdyinteqs}, and note that
$$
  \nabla U= \nabla(e_0\cdot C_k^+ h)= \partial_t C_k^+h 
  = -T_k C_k^+ h = -E_k^+|T_k| e^{-t|T_k|}h.
$$
Since $T_k$ is self-adjoint, it satisfies $L_2$ quadratic estimates. Therefore
\begin{multline*}
    \iint_{\R^2_+} |\nabla U(t,x)|^2 dtdx =\int_0^\infty \|E_k^+|T_k| e^{-t|T_k|}h\|_2^2 dt \\
    \le \int_0^\infty \| |tT_k|^{1/2}e^{-t|T_k|}(|T_k|^{1/2}h) \|_2^2 \frac{dt}t
    \approx \| |T_k|^{1/2}h \|_2^2 \\
    \approx  \int_0^\infty \| |tT_k|^{3/2} (1+t^2T_k^2)^{-1}  (|T_k|^{1/2}h) \|_2^2 \frac{dt}t
     = \int_0^\infty \|h - P_t h \|_2^2\frac {dt}{t^2},
\end{multline*}
since $|z|^{1/2}e^{-|z|}$ and $|z|^{3/2}(1+z^2)^{-1}$ decays at $0$ and $\infty$.
Lemma~\ref{lem:ptqt} shows that
$$
   \|h - P_th \|_2^2 = \| \psi - p_t * \psi \|_2^2 + k^2 \| \s(x) \psi- p_t* (\s(x)\psi) - 2t (\psi,p_t) \s(x) p_t   \|_2^2,
$$
where $p_t(x):= \tfrac 1{2t}e^{-|x|/t}$.
We now recall that $\psi= \tilde\psi+ 2\psi(0) p_1$, where $\tilde\psi\in \dot H^{1/2}(\R)$ and
$|\tilde\psi(x)|\le C\min(|x|, 1/|x|)^\gamma$ for some $\gamma>0$, and that
$$
  \|f\|^2_{\dot H^{1/2}(\R)} \approx \int_\R\int_\R \frac{|f(x)-f(y)|^2}{|x-y|^2} dx dy
  \approx \int_\R |\hat f(\xi)|^2 |\xi| d\xi.
$$
Using the first expression for the norm, we verify that $\s(x)\tilde\psi\in \dot H^{1/2}(\R)$, and
applying Plancherel's theorem and using the second expression, we show that
$$
  \int_0^\infty \| f- p_t* f\|_2^2\frac {dt}{t^2} \le C \|f\|^2_{\dot H^{1/2}(\R)}.
$$
Thus it remains to show that 
$$
  \int_0^\infty |(\tilde\psi, p_t)|^2 \frac{dt}t + |\psi(0)|^2 \int_0^\infty \| \s(x) p_1- p_t*(\s(x) p_1)
      - 2t(p_1,p_t) \s(x) p_t  \|^2\frac {dt}{t^2} <\infty.
$$
Here the first term is finite since $|(\tilde\psi, p_t)|\le C \min(t, 1/t)^\gamma$, and an explicit 
calculation for the second term gives
$\|\cdots \|^2 \le C \min(t^2, 1)$,
which shows that the second term is finite. This proves the lemma.
\end{proof}

Restricting (\ref{eq:harmmeas}) to $x=0$ gives the formula (\ref{eq:harmmeasimag}).
In the quadrants $t>0$, $\pm x>0$, the equation $\divv A_k \nabla U=0$ reduces to 
the Laplace equation. 
For example, for the first quadrant, the Poisson integral for the Laplace equation yields
\begin{multline}  \label{eq:poissonforquadrants}
  U(t,x)= 
  \frac 1\pi \int_0^\infty \frac{4xty}{4x^2t^2+(x^2-t^2-y^2)^2} u(y) dy \\
  +\frac 1\pi \int_0^\infty \frac{4xts}{4x^2t^2+(x^2-t^2+s^2)^2} U(s,0) ds
\end{multline}
when $t>0$, $x>0$.
Inserting the expression (\ref{eq:harmmeasimag}) in the second term and calculating the integral
\begin{multline*}
  \frac 1\pi\int_0^\infty \frac{s^{2+\alpha}}{(4x^2t^2+(x^2-t^2+s^2)^2)(s^2+y^2)} ds \\
  = - \frac 12\frac 1{\cos(\pi\alpha/2)} \frac{|y|^{1+\alpha}}{4x^2t^2+(x^2-t^2-y^2)^2}
  +\frac 1{4xt}\re\left( (1-ik)\frac{(t+ix)^{1+\alpha}}{(t+ix)^2+y^2} \right),
\end{multline*}
is seen to give back (\ref{eq:harmmeas}).

It is clear from (\ref{eq:poissonforquadrants}) that $U$
satisfy the Laplace equation in the quadrants $t>0$, $\pm x>0$.
Furthermore, calculating $\partial_x U(t, 0\pm)$ from (\ref{eq:harmmeas}) and $\partial_t U(t,0)$
from (\ref{eq:harmmeasimag}), shows that 
$\pd_x U(t,0+) - \pd_x U(t,0-) = 2k \pd_t U(t,0)$ for $t>0$. 
This proves that $\divv A_k \nabla U=0$ in $\R^2_+$.
Furthermore it is clear from (\ref{eq:poissonforquadrants}) that $U$ has boundary trace $u$ in the
weak sense.
This proves that (\ref{eq:harmmeas}) gives the $\dot H^1$
solution to the Dirichlet problem when $\alpha\in (-1,1)$.
Finally we note that it follows from
(\ref{eq:poissonforquadrants}) and (\ref{eq:harmmeasimag}) that 
$P_\alpha(t,x;y)\ge 0$ for all $t>0$, $x,y\in\R$, when $\alpha\in (-1,1)$.

\bibliographystyle{acm}
%GATHER{AKMcDirac.bib}  % makes sure WinEdt finds citations...

%\bibliography{nonunique}
\end{document}